\theoremstyle{plain}
\newtheorem{theorem}{Theorem}
\newtheorem{lemma}[theorem]{Lemma}
\newtheorem{corollary}[theorem]{Corollary}
\newtheorem{thm}{Theorem}
\theoremstyle{definition}
\newtheorem{definition}[theorem]{Definition}
\newtheorem{conjecture}[theorem]{Conjecture}
\newtheorem*{remark*}{Remark}
\theoremstyle{remark}
\newenvironment{thmrestate}[1]
  {%
   \addtocounter{thm}{-1}%
   \begin{thm}}
  {\end{thm}}
\title{\bf The (p,q)-extremal problem and the fractional chromatic number of\\ Kneser hypergraphs}
\author{Gabriela Araujo-Pardo\\
\small\tt garaujo@math.unam.mx
\and
 Juan Carlos Díaz-Patiño\\
\small\tt juancdp@im.unam.mx\\
\and
Luis Montejano\\
\small\tt luis@matem.unam.mx 
\and
Deborah Oliveros\\
\small\tt dolivero@matem.unam.mx\\
\and
\small Instituto de Matemáticas\\[-0.8ex]
\small Universidad Nacional Autónoma de México\\[-0.8ex] 
\small Ciudad Universitaria, 04510, México D.F.\\
}
\begin{document}

\maketitle

\begin{abstract}
The problem of computing the chromatic number of Kneser hypergraphs has been extensively studied over the last 40 years and the fractional version of the chromatic number of Kneser hypergraphs is only solved for particular cases. The \emph{$(p,q)$-extremal problem} consists in finding the maximum number of edges on a $k$-uniform hypergraph $\mathcal{H}$ with $n$ vertices such that among any $p$ edges some $q$ of them have no empty intersection. In this paper we have found a link between the fractional chromatic number of Kneser hypergraphs and the $(p,q)$-extremal problem and also solve the $(p,q)$-extremal problem for graphs if $n$ is sufficiently large and $p \geq q \geq 3$ by proposing it as a problem of extremal graph theory. With the aid of this result we calculate the fractional chromatic number of Kneser hypergraphs when they are composed with sets of cardinality 2.

\end{abstract}

\section{Introduction}
Let $p,q$ be natural numbers such that $p\geq q \geq 2$. A $k$-uniform hypergraph $\mathcal{H}$ has the $(p,q)$-\emph{property} if among any $p$ edges some $q$ of them have a common vertex. The \emph{$(p,q)$-extremal problem} consists in finding the maximum number of edges on a $k$-uniform hypergraph $\mathcal{H}$ with $n$ vertices that satisfies the $(p,q)$-property. The $(p,q)$-property has been extensively studied in discrete geometry. It was proposed by Hadwiger and Debrunner in 1956 (see \cite{hadwiger1957variante}), and studied by N.\ Alon and D.\ J.\ Kleitman \cite{alon1992piercing}. For an excellent survey of the topic see \cite{eckhoff2003survey}. The direction of our approach is related to several classical theorems of extremal graph theory and extremal set theory.

The $(p,2)$-extremal problem for $k$-uniform hypergraphs is equivalent to finding the maximum number of edges on a $k$-uniform hypergraph with no $p$ disjoint edges. This problem has been studied in several papers, and the cases of $k=2,3$ are completely solved (see \cite{erdHos1959maximal}, \cite{frankl2012maximum}, \cite{frankl2012maximumhypergraph} and \cite{luczak2012erdos}), and for the general case, there are partial results in \cite{erdHos1965problem}, \cite{frankl2013improved}, and \cite{huang2012size}.

The $(2,2)$-extremal problem is equivalent to the Erd\H{o}s-Ko-Rado Theorem (see \cite{erdos1961intersection}) and for $q\geq 2$ there exists a generalization of the Erd\H{o}s-Ko-Rado theorem given by P.\ Frankl \cite{frankl1976sperner} in a lemma that is equivalent to the $(q,q)$-extremal problem.

In this paper we solve the $(p,q)$-extremal problem for graphs and also give a lower bound for the $(p,q)$-extremal problem for $k$-uniform hypergraphs. The main result is the following:

\begin{theorem}[The $(p,q)$-extremal problem for graphs]\label{main}
Let $G$ be a graph with $n$ vertices satisfying the $(p,q)$-property; for $n\geq 2p^2$ and $p\geq q\geq 3$ we have
\[
|E(G)|\leq \binom{n}{2} -\binom{n-t}{2}+r,
\]
where $\displaystyle t=\left\lfloor \frac{p-1}{q-1}\right\rfloor$ and $r$ is the residue of $\displaystyle\frac{p-1}{q-1}$. This bound is sharp.
\end{theorem}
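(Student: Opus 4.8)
The plan is to first reformulate the $(p,q)$-property in purely graph-theoretic terms and then run an extremal-graph argument. Writing $d := q-1 \ge 2$, observe that $q$ edges of a graph have a common vertex precisely when they form a star $K_{1,q}$; hence a $p$-set of edges violates the $(p,q)$-property exactly when every vertex lies on at most $q-1 = d$ of those edges, i.e.\ when they span a subgraph of maximum degree $\le d$. Therefore $G$ has the $(p,q)$-property if and only if every subgraph $H \subseteq G$ with $\Delta(H)\le d$ satisfies $|E(H)|\le p-1 = td+r$, where $0\le r\le d-1$. In this language the conjectured extremal graph — a set $T$ of $t$ vertices joined to everything, together with $r$ further edges inside $V\setminus T$ — clearly has the property: a subgraph $H$ of maximum degree $\le d$ contains at most $\sum_{v\in T}\deg_H(v)\le td$ edges meeting $T$ and at most $r$ edges avoiding $T$, so $|E(H)|\le td+r=p-1$. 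Since this graph has $\binom n2-\binom{n-t}2+r = tn-\binom{t+1}2+r$ edges, the sharpness half is done, and it remains to prove the upper bound.

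For the upper bound, fix a threshold $\tau=\Theta(p)$ (a small explicit multiple of $p$; the precise constant is what forces the hypothesis $n\ge 2p^2$) and set $D:=\{v\in V(G):\deg_G(v)\ge\tau\}$, $t':=|D|$. First I would show $t'\le t$: if $D$ contained $t+1$ vertices, then from each of $t+1$ of them one could greedily choose $d$ edges to ``fresh'' endpoints — avoiding the $\le(t+1)+td=O(p)<\tau$ vertices already involved — producing a subgraph of maximum degree $\le d$ with $(t+1)d\ge p$ edges, a contradiction. Next I would bound $|E(G-D)|$: build a maximal subgraph $H$ of $G-D$ with $\Delta(H)\le d$; every edge of $G-D$ outside $H$ meets a vertex of $H$-degree exactly $d$, and there are at most $2|E(H)|/d\le|E(H)|$ such vertices, each of degree $<\tau$ in $G-D$, so $|E(G-D)|\le(1+\tau)|E(H)|$. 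Thus $|E(G-D)|\ge(1+\tau)p$ would already exhibit a maximum-degree-$\le d$ subgraph with $\ge p$ edges, impossible; hence $|E(G-D)|<(1+\tau)p=O(p^2)$.

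Now combine these observations. Every edge of $G$ is either incident to $D$ or lies in $G-D$, so
\[
|E(G)|\le \binom{t'}{2}+t'(n-t')+|E(G-D)| = t'n-\binom{t'+1}{2}+|E(G-D)|.
\]
If $t'<t$, the right-hand side is at most $(t-1)n+O(p^2)$, which is strictly below $tn-\binom{t+1}2+r$ once $n\ge 2p^2$, so this case is settled with room to spare. If $t'=t$, the bound reads $tn-\binom{t+1}2+|E(G-D)|$, and it remains to show $|E(G-D)|\le r$. This is the crucial point: if $|E(G-D)|\ge r+1$, pick any $r+1$ edges of $G-D$ — these automatically span a subgraph of maximum degree $\le r+1\le d$ — and then, exactly as in the argument bounding $t'$, extend it by $d$ fresh edges at each of the $t$ vertices of $D$, obtaining a subgraph of maximum degree $\le d$ with $td+(r+1)=p$ edges, contradicting the $(p,q)$-property. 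Hence $|E(G-D)|\le r$ and $|E(G)|\le tn-\binom{t+1}2+r=\binom n2-\binom{n-t}2+r$.

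The main obstacle I anticipate is the bookkeeping that makes the threshold $\tau$, the $O(p^2)$ slack in $|E(G-D)|$, and the hypothesis $n\ge 2p^2$ all fit together without circularity: one must check that the greedy ``freshness'' steps never need to avoid more than $O(p)$ vertices (so $\tau$ stays linear in $p$ rather than blowing up), and one must confirm that no construction mixing a smaller universal set with a denser leftover beats the stated one — which is precisely what the case split $t'=t$ versus $t'<t$ is designed to rule out.
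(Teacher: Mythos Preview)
Your approach is correct and genuinely different from the paper's. The paper proceeds by induction on $p$: assuming $e(G)>\varphi_2(n,p-1,q)$, the induction hypothesis produces a subgraph $F$ with exactly $p-1$ edges and $\Delta(F)\le q-1$; a K\"onig-theorem argument then locates a $t$-matching between $V(F)$ and $V(G)\setminus V(F)$, whose $F$-side endpoints form a set $X$ of size $t$ which, after several edge-replacement arguments, is shown to cover all but $r$ edges of $G$. Your argument is direct rather than inductive: you isolate the high-degree set $D$ by a threshold, bound $|D|$ and $|E(G-D)|$ by greedily assembling forbidden bounded-degree subgraphs, and finish with a case split on $|D|$. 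What you gain is a cleaner global picture---no auxiliary witness $F$, no K\"onig step---and an argument that makes the role of the $n=\Theta(p^{2})$ hypothesis transparent. What the paper's route buys is a sharper structural conclusion (it actually pins down the extremal graphs as members of $\mathfrak F_2(n,t,r)$, not just the edge count) and slightly more explicit constants. Your acknowledged obstacle, matching the bookkeeping to the stated threshold $n\ge 2p^{2}$, is real but surmountable: keeping the sharper estimate $|S|\le 2|E(H)|/d$ rather than the cruder $|S|\le |E(H)|$ in your bound on $|E(G-D)|$ recovers enough slack, with the worst case occurring at $d=2$.
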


In 1955 Kneser posed a combinatorial problem (see \cite{kneser1955aufgabe}) that was later solved by Lovasz in 1978 using the Borsuk-Ulam Theorem, showing one of the earliest and most beautiful applications of topological methods in combinatorics (see \cite{lovasz1978kneser}). Lovasz's paper shows the equivalence between the Kneser conjecture and the chromatic number of Kneser graphs. There are several generalizations of the Kneser-Lovasz Theorem, one of them given by K.\ S.\ Sarkaria in 1990. He found the chromatic number of Kneser hypergraphs, also using topological tools (see \cite{sarkaria1990generalized}). Thus it is natural to ask for the fractional version of this result; i.e., what is the fractional chromatic number of the Kneser hypergraphs? The fractional chromatic number of a Kneser hypergraph can be computed in terms of the maximum number of edges of a hypergraph with the $(p,q)$-property, as we will see towards the end of this paper.

The paper is organized as follows: Section 2  presents some definitions and preliminary results; in Section 3, we do an extensive analysis of the $(p,q)$-extremal problem for graphs and prove the main theorem as well as analyzing some particular cases. Finally in Section 4 we define the fractional chromatic number of Kneser hypergraphs, and give the relationship between the $(p,q)$-extremal problem and the fractional chromatic number of the Kneser hypergraphs.

\section{Notation and Preliminaries}
A hypergraph $\mathcal{H}=(V(\mathcal{H}),E(\mathcal{H}))$ consists of a set $V(\mathcal{H})$ of vertices and a set $E(\mathcal{H})$ of edges, where each edge is a subset of $V(\mathcal{H})$. We denote the number of vertices and edges by $v(\mathcal{H})$ and $e(\mathcal{H})$ respectively. If $X$ and $Y$ are sets of vertices of $V(\mathcal{H})$ we denote by $E[X,Y]$ the set of edges of $\mathcal{H}$ with one vertex in $X$ and other in $Y$, and by $e(X,Y)$ their cardinality. If each edge of $\mathcal{H}$ is a $k$-subset of $V(\mathcal{H})$, we say that $\mathcal{H}$ is a \emph{$k$-uniform hypergraph}. If $v(\mathcal{H})=n$, it is convenient to just let $V(\mathcal{H})=[n]=\{1,2,\ldots,n\}$.  The family of all $k$-sets of $[n]$ is denoted by $\binom{[n]}{k}$, and for convenience, we write $\mathcal{H}\subseteq \binom{[n]}{k}$ to indicate that $\mathcal{H}$ is a $k$-uniform hypergraph on vertex set $[n]$. The hypergraph $\mathcal{F}$ is called a \emph{subhypergraph} of $\mathcal{H}$ if $V(\mathcal{F})\subseteq V(\mathcal{H})$ and $E(\mathcal{F})\subseteq E(\mathcal{H})$ and is denoted by $\mathcal{F}\subseteq \mathcal{H}$. The \emph{degree} of a vertex $v\in \mathcal{H}$ with respect to subhypergraph $\mathcal{F}$ is defined as $\deg_{\mathcal{F}}(v):=|\{e\in E(\mathcal{F})\colon v\in e\}|$. The \emph{minimum degree} and \emph{maximum degree} of a hypergraph $\mathcal{H}$ are denoted by $\delta(\mathcal{H})$ and $\Delta(\mathcal{H})$ respectively. A $t$-matching is a set of $t$ pairwise disjoint edges. If $U$ is a nonempty subset of the vertex set $V(\mathcal{H})$ of a hypergraph $\mathcal{H}$, then the subhypergraph $\langle U\rangle$ of $\mathcal{H}$ \emph{induced} by $U$ is the hypergraph having vertex set $U$ and whose edge set consists of those edges of $\mathcal{H}$ incident to two elements of $U$.

Given two hypergraphs $\mathcal{H}$ and $\mathcal{F}$, a \emph{hypergraph morphism} between $\mathcal{H}$ and $\mathcal{F}$ is a function $f\colon V(\mathcal{H})\rightarrow V(\mathcal{F})$ that preserves edges. An \emph{automorphism} is a morphism between a hypergraph and itself. A hypergraph  $\mathcal{H}$ is called \emph{vertex-transitive} if for any pair of vertices $u,v\in V(\mathcal{H})$ there exists an automorphism $f$ such that $f(v)=u$. 

Let $\mathfrak{F}$ be a family of hypergraphs. A hypergraph $\mathcal{H}$ is $\mathfrak{F}$-free if $\mathcal{H}$ contains no element of $\mathfrak{F}$ as a subhypergraph. We also say that $\mathfrak{F}$ is a \emph{forbidden family of hypergraphs for $\mathcal{H}$}. The \emph{extremal number} for the pair $(n,\mathfrak{F})$ is denoted by
\[ex_k(n, \mathfrak{F}) := max\{e(\mathcal{H}) \colon \mathcal{H}\subseteq \binom{[n]}{k}, \text{ and $\mathcal{H}$ is $\mathfrak{F}$-free}\}.
\]An \emph{extremal hypergraph for the pair $(n,\mathfrak{F})$} is a hypergraph $\mathcal{H}$ with $n$ vertices, $\mathfrak{F}$-free and exactly $ex_k(n,\mathfrak{F})$ edges. If $k=2$ then the definition is equivalent to the usual $ex(n,F)$ defined in extremal graph theory (see \cite{diestel2005graph}). \label{prop} It is easy to check that if $ex_k(n,\mathfrak{F}) $ is a monotone function with respect to $\mathfrak{F}$; i.e., if $\mathfrak{F'}$ is a subfamily of $\mathfrak{F}$, then $ex_k(n,\mathfrak{F})\leq ex_k(n,\mathfrak{F'})$.

The \emph{bounded degree family}, denoted by $\mathfrak{D}_k(p,q)$, is defined as the family of all $k$-uniform hypergraphs with $p$ edges, no isolated vertices and maximum vertex degree $q-1$. That is,
\[ 
\displaystyle \mathfrak{D}_k(p,q):=\left\{ \mathcal{H}\subseteq \binom{[s]}{k}\colon s\in \mathbb{N},e(\mathcal{H})=p, \delta(\mathcal{H})\geq 1, \Delta(\mathcal{H})\leq q-1\right\}.
\]
The handshaking lemma for hypergraphs gives us the inequalities
\[\frac{kp}{q-1}\leq v(\mathcal{H})\leq kp
\]
for each $\mathcal{H}\in \mathfrak{D}_k(p,q)$.

A hypergraph $\mathcal{H}\subseteq \binom{[n]}{k}$ satisfies the $(p,q)$-property if and only if $\mathcal{H}$ is $\mathfrak{D}_k(p,q)$-free. So the $(p,q)$-extremal problem is equivalent to finding $$ex_k(n,\mathfrak{D}_k(p,q)).$$

\begin{definition}Given $n,k,t$, positive integers with $n\geq t  \geq 2$, we define the $k$-uniform hypergraph $\mathcal{F}_k(n,t)\subseteq \binom{[n]}{k}$:
\[
\mathcal{F}_k(n,t):= \left\{A\in \binom{[n]}{k}\colon A\cap [t]\neq \emptyset\right\}.
\] 
Furthermore,
\[\mathfrak{F}_k(n,t,r)=\{\mathcal{H}\subseteq \binom{[n]}{k}\colon \mathcal{F}_k(n,t)\subseteq \mathcal{H}, e(\mathcal{H})=e(\mathcal{F}_k(n,t))+r   \}\]
is the family of  $k$-hypergraphs that contains $\mathcal{F}_k(n,t)$ as a subhypergraph and has exactly $r$ edges with no vertices in $[t]$. The graph $\mathcal{F}_2(n,t)$ is also known as a \emph{split graph} (see \cite{golumbic2004algorithmic}).
\end{definition}

\begin{lemma}\label{ida}
If $\mathcal{H}\in \mathfrak{F}_k(n,t,r)$, then $\mathcal{H}$ satisfies the $(p,q)$-property with $t=\lfloor\frac{p-1}{q-1}\rfloor$, and $r$ is the residue of $\frac{p-1}{q-1}$.
\end{lemma}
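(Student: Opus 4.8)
The plan is to prove this directly via a short double-counting argument, using the reformulation recorded just above the statement: a hypergraph satisfies the $(p,q)$-property if and only if it is $\mathfrak{D}_k(p,q)$-free. So I would argue by contradiction: suppose $\mathcal{H}\in\mathfrak{F}_k(n,t,r)$ is \emph{not} $\mathfrak{D}_k(p,q)$-free. Then $\mathcal{H}$ contains a subhypergraph on $p$ edges $e_1,\dots,e_p$ with maximum degree at most $q-1$ (the ``no isolated vertices'' condition in the definition of $\mathfrak{D}_k(p,q)$ is automatic and will not be needed).

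Next I would classify these $p$ edges according to how they meet the set $[t]$. Since $\mathcal{F}_k(n,t)\subseteq\mathcal{H}$ consists of \emph{all} $k$-sets meeting $[t]$, and $\mathcal{H}$ has exactly $r$ further edges, every edge of $\mathcal{H}$ disjoint from $[t]$ is one of those $r$ extra edges. Let $b$ be the number of the $e_i$ that are disjoint from $[t]$; then $b\le r$. Let $a=p-b$ be the number of the $e_i$ that intersect $[t]$.

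The key step is to bound $a$ by counting incidences between $\{e_1,\dots,e_p\}$ and the $t$ vertices of $[t]$. On one hand, each of the $a$ edges meeting $[t]$ contributes at least one incidence, so the total is at least $a$. On the other hand, the maximum-degree hypothesis says each vertex of $[t]$ lies in at most $q-1$ of the $e_i$, so the total is at most $t(q-1)$. Hence $a\le t(q-1)$, and therefore $p=a+b\le t(q-1)+r$. By the definitions $t=\lfloor(p-1)/(q-1)\rfloor$ and $r=(p-1)\bmod(q-1)$ we have $p-1=t(q-1)+r$, so this reads $p\le p-1$, a contradiction. Thus $\mathcal{H}$ is $\mathfrak{D}_k(p,q)$-free, i.e., it has the $(p,q)$-property.

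There is essentially no serious obstacle here; the argument is a one-line pigeonhole once the $p$ edges are split according to their intersection with $[t]$. The only points that require any care are bookkeeping: making sure that ``$\mathcal{F}_k(n,t)$ contains every $k$-set meeting $[t]$'' really does force the edges of $\mathcal{H}$ disjoint from $[t]$ to be exactly the $r$ extra edges, and confirming the integer identity $p-1=t(q-1)+r$ coming from the definitions of $t$ and $r$.
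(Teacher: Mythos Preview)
Your proof is correct and is essentially the same argument as the paper's: both split the $p$ chosen edges into those meeting $[t]$ and the at most $r$ that miss it, then apply pigeonhole on the $t$ vertices of $[t]$. The only cosmetic difference is that you phrase it as a contradiction via the $\mathfrak{D}_k(p,q)$-free reformulation and double-count incidences, whereas the paper argues directly that among any $p$ edges at least $p-r=t(q-1)+1$ meet $[t]$ and hence some vertex of $[t]$ lies in at least $q$ of them.
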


\begin{proof}
By hypothesis we have that $p-1=t(q-1)+r$, where $t\in \mathbb{N}$ and $0\leq r<q-1$. All the edges of $\mathcal{H}$ have a vertex in $[t]$ except, say, the edges $\mathcal{E}=\{e_1,e_2,\ldots,e_r\}$. If we take any $p$ edges in $\mathcal{H}$ then at most $r$ edges are in  $\mathcal{E}$. Then the other $p-r=t(q-1)+1$ edges have a vertex in $[t]$, but by the pigeonhole principle one vertex has degree at least $q$.
\end{proof}

 \section{The (p,q)-extremal problem for graphs}
The aim of this section is to prove our main theorem. To simplify the notation we introduce the following lemma.

\begin{lemma}\label{konig}
Let $G$ be a bipartite graph with partition sets $A,B$. If $|A|<|B|$ and $e(G)>(t-1)|B|$, then $G$ has a $t$-matching.
\end{lemma}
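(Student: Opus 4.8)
The plan is to prove the contrapositive in a slightly repackaged form: assume $G$ is bipartite with sides $A,B$, $|A|<|B|$, and $G$ has no $t$-matching, and deduce $e(G)\le (t-1)|B|$. This is essentially a consequence of the König–Egerváry theorem together with a pigeonhole count, so the main content is bookkeeping rather than a new idea.

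First I would invoke König's theorem: since $G$ has no $t$-matching, its maximum matching has size at most $t-1$, hence $G$ has a vertex cover $C$ with $|C|\le t-1$. Split $C$ into its parts $C_A=C\cap A$ and $C_B=C\cap B$, say with $|C_A|=a$ and $|C_B|=b$ where $a+b\le t-1$. Every edge of $G$ is incident to a vertex of $C$, so I would bound $e(G)$ by the total degree of the vertices in $C$. The vertices in $C_B$ lie in $B$ and have degree at most $|A|$, contributing at most $b\,|A|$ edges; the vertices in $C_A$ lie in $A$ and have degree at most $|B|$, contributing at most $a\,|B|$ edges. Thus
\[
e(G)\le a\,|B|+b\,|A|.
\]
Now I would use $|A|<|B|$, i.e. $|A|\le |B|-1< |B|$, to replace $|A|$ by $|B|$ in the second term, giving $e(G)\le (a+b)|B|\le (t-1)|B|$, which is the desired bound; so if $e(G)>(t-1)|B|$ then $G$ must contain a $t$-matching.

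I do not expect a serious obstacle here; the only point requiring a little care is making sure the degree bounds are applied on the correct side (covering vertices in $A$ can have large degree, up to $|B|$, while covering vertices in $B$ have degree at most $|A|$), and that the strict inequality $|A|<|B|$ is genuinely used — it is what lets us absorb the $b\,|A|$ term into $(t-1)|B|$ without loss. If one prefers to avoid König's theorem, an alternative is a direct greedy argument: repeatedly extract disjoint edges; if at some stage fewer than $t$ edges have been removed and no further edge is disjoint from the current partial matching $M$ (with at most $t-1$ edges, hence at most $2(t-1)$ endpoints, of which at most $t-1$ lie in $B$), then every remaining edge meets the at most $t-1$ vertices of $M$ in $A$ or the at most $t-1$ vertices of $M$ in $B$, and the same degree count yields $e(G)\le (t-1)|B|$. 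Either route is short; I would present the König version for brevity.
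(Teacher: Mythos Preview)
Your K\"onig-based argument is correct and is exactly the paper's approach, only with the degree count spelled out where the paper just asserts $e(G)\le (t-1)|B|$; indeed each covering vertex has degree at most $\max(|A|,|B|)=|B|$, so a cover of size $\le t-1$ suffices. One caution: the greedy alternative you sketch does \emph{not} yield the same bound, because the endpoints of a maximal matching $M$ with $|M|\le t-1$ form a vertex cover of size $2|M|$, and the identical degree count then gives only $e(G)\le |M|\,|B|+|M|\,|A|\le 2(t-1)|B|$; you really do need K\"onig (or the defect form of Hall's theorem) to get a cover of size at most $t-1$. Also, note that the strict inequality $|A|<|B|$ is not actually essential for the bound---$|A|\le|B|$ already gives $b|A|\le b|B|$---so that remark can be softened.
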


\begin{proof}
We proceed by contradiction. Suppose that the maximum matching has at most $t-1$ edges. Then by Konig's theorem (see \cite{diestel2005graph}), the minimum vertex cover has at least $t-1$ vertices; then $e(G)\leq (t-1)|B|$, which is a contradiction.
\end{proof}

Observe that for every $\mathcal{H}\in \mathfrak{F}_k(n,t,r)$, $e(\mathcal{H})=\binom{n}{k}-\binom{n-t}{k}+r$. Then following the notation of Lemma \ref{ida}, we may define the function $\varphi_k(n,p,q):=e(\mathcal{H})$.

Theorem \ref{main} can be restated using the previous definitions as follows:

\begin{thmrestate}{main}
Let $n,p,q$ be natural numbers. For $n\geq 2p^2$ and $p\geq q \geq 3$,  we have that 
\[ex(n,\mathfrak{D}_2(p,q))=\varphi_2(n,p,q)
\]and the extremal graph is any $G\in \mathfrak{F}_2(n,t,r)$  up to isomorphism, where $\displaystyle t=\lfloor \frac{p-1}{q-1} \rfloor$ and $r$ is the residue of $\frac{p-1}{q-1}$.
\end{thmrestate}

\begin{proof}
Let $G$ be a graph with $n\geq 2p^2$ vertices that satisfies the $(p,q)$-property for $p\geq q\geq 3$. By Lemma \ref{ida} we know that  $$\varphi_2(n,p,q)\leq ex(n,\mathfrak{D}_2(p,q)),$$  so we may assume that $G$ is a graph with $e(G)\geq \varphi_2(n,p,q)$ edges.

It is not difficult to show that $\varphi_2(n,p,q)$ is an strictly increasing function with respect to the variable $p$.

The proof is by induction on $p$. If $p=q$ the result is trivial. Suppose our theorem is true for $p-1\geq q$; then we shall prove it for $p$. Since $e(G)\geq \varphi_2(n,p,q)> \varphi_2(n,p-1,q)$, we may conclude that $G$ contains a subgraph $F\in \mathfrak{D}_2(p-1,q)$, not necessarily induced. 

%
 
In order to prove that $e(G)\leq \varphi _{2}(n,p,q)$, it will be enough to show that there is a set $X\subseteq V(G)$ of cardinality $t$ contained in $V(F)$ such that $\langle V(G)- X\rangle$ contains exactly $r$ edges.

We will proceed by showing the following three statements:

\begin{enumerate}[a)]
\item $V(G)- V(F)$ is an independent set of vertices in $G$,	
\item $e(V(F)- X,V(G)- V(F)) =0$, and
\item $\langle V(F)- X\rangle $ contains exactly $r$ edges in $G$.
\end{enumerate}

\noindent \textbf{Claim a)} 

Suppose that there exists an edge $e$ contained in $\langle V(G)- V(F)\rangle$. Then the edges of $F$ plus $e$ contradict the $(p,q)$-property.

\medskip\noindent \textbf{Claim b)}
 
We will start by showing that $e( V(F),V(G)- V(F) )>(t-1)(v(G)- v(F))$. Suppose that $e( V(F),V(G)- V(F) )\leq (t-1)( v(G)- v(F))$. Then 
\[
e(G)=e( F )+e(V(F),V(G)- V(F) )+e(V(G)- V(F) ),
\]
and we have that
\[nt -\binom{t+1}{2}+r=\varphi _{2}(n,p,q)\leq \binom{2(p-1)}{2}+\left (t-1\right ) \left (n-\frac{2(p-1)}{q-1} \right ),
\]which implies that $ n<2p^2$, but this is a contradiction.

Since $e( V(F),V(G)- V(F))>(t-1)(v(G)- v(F)) $ and $n\geq 2p^2\geq 4(p-1)\geq 2 v(F)$, then by Lemma \ref{ida}, there is a $ t$-matching contained in $E[V(F),V(G)- V(F)] $. Let $\{e_{1},\dots,e_{t}\}$ be such a matching, let $x_{i}$ be the vertex of the edge $e_{i}$ that is in $V(F)$, and let $X=\{x_{1},x_2,\ldots ,x_{t}\}\subseteq V(F)$.

Consider now $Y=\{y\in V(F)\mid y \text{ is adjacent to some vertex of }V(G)- V(F)\}$. Clearly $X\subseteq Y$. Note that $Y$ is an independent set of vertices in $F$, and suppose there is a edge $f=\{y_{1},y_{2}\}\in F,$ where $y_{1}$ and $y_{2}$ belong to $Y$. Then there are edges $f_{1}=\{y_{1},v_{1}\}$ and $f_{2}=\{y_{2},v_{2}\}$, where $v_{1}$ and $v_{2}$ are not in $V(F)$. By replacing the edge $f$ by the edges $f_{1}$ and $f_{2}$ in $F$, we obtain a contradiction to the fact that $G$ satisfies the $(p,q)$-property (in this step it is important that $q\geq 3$). Next we shall prove that $X=Y$; note that for every $y\in Y$, $\deg_{F}(y)=q-1,$ otherwise, since $y$ is adjacent to some vertex of $V(G)- V(F)$, there is an edge $f_{1}=\{y,v\}$, where $v$ is not in $V(F)$, but hence the edges of $F$ plus $f$ contradict the $(p,q)$-property for $G$. If $|Y|\geq t+1$ then 
\[p-1=e(F) \geq (q-1)|Y| \geq (q-1)(t+1)>t(q-1)+r,
\]which is also a contradiction. Then $|Y|=t$, and so $X=Y$.

\medskip\noindent \textbf{Claim c)} 

First note that $\langle V(F)- X\rangle$ contains exactly $r$ edges in $F$. This is because $F$ has $p-1=t(q-1)+r$ edges, $X$ is a set of $t$ independent points in $F$ and for every $x_i \in X$, $\deg_{F}(x_i)=q-1$. Suppose c) is not true. Then there is an edge $f=\{y_{1},y_{2}\}$ in $E(G)- E(F)$ such that $y_{1}$ and $y_{2}$ belong to $V(F)- X$. We have three cases.

\begin{enumerate}[i)]
\item $\deg_{F}(y_{1})<q-1$ and $\deg_{F}(y_{2})<q-1$;

\item $\deg_{F}(y_{1})<q-1$ and $\deg_{F}(y_{2})=q-1$:

\item $\deg_{F}(y_{1})= \deg_{F}(y_{2})=q-1$.
\end{enumerate}

If case i) occurs, the edges of $F$ plus $f$ contradict the $(p,q)$-property. For case ii), note that there is an edge $f_{1}=\{y_{2},x_{i}\}$ in $F,$ because $\langle V(F)- X\rangle$ contains exactly $r$ edges in $F$ and $r<q-1.$ By replacing $f_{1}$ with $f$ and $e_{i}$ in $F,$ we obtain a contradiction to the $(p,q)$-property. Finally, the fact that $\langle V(F)- X\rangle$ contains exactly $r$ edges in $F$ and $r<q-1$ implies in case iii) that there are edges $f_{1}=\{y_{1},x_{i}\}$ and $f_{2}=\{y_{2},x_{j}\}$, both in $F$, where $i\neq j.$ Replace $f_{1}$ and $f_{2}$ by $f,e_{i},e_{j}$ to obtain a contradiction to the $(p,q)$-property. This confirms that $\langle V(F)-X\rangle$ contains exactly $r$ edges in $G$ and consequently that $e(G)\leq \varphi_2(n,p,q)$. We also proved that $G\in \mathfrak{F}_2(n,t,r)$. \end{proof}

\begin{remark*} The bound $2p^2$ can be improved to $\binom{2(p-1)}{2} +\binom{t+1}{2} -\frac{2(p-1)(t-1)}{q-1}-r$.
\end{remark*}

\begin{conjecture}\label{conj}
For $n$ sufficiently large and $p\geq q \geq 2$,
\[ex_k(n,\mathfrak{D}_k(p,q))=\varphi_k(n,p,q).\]
\end{conjecture}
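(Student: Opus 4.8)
\noindent\emph{Approach.} The lower bound $ex_k(n,\mathfrak{D}_k(p,q))\ge\varphi_k(n,p,q)$ is exactly Lemma~\ref{ida}, so the whole content is the matching upper bound, and the natural line of attack is to transcribe the induction of Theorem~\ref{main} to the $k$-uniform setting. I would first dispose of the boundary cases from the literature. For $q=2$ the family $\mathfrak{D}_k(p,2)$ is precisely the set of $p$-matchings, so $ex_k(n,\mathfrak{D}_k(p,2))$ is the maximum number of edges of a $k$-graph with no $p$ pairwise disjoint edges, and Erd\H os's theorem on independent $r$-tuples \cite{erdHos1965problem} (see also \cite{frankl2013improved}) gives exactly $\binom nk-\binom{n-p+1}k=\varphi_k(n,p,2)$ once $n$ is large. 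For $q\ge 3$ I would induct on $p$. The base case $p=q$ (so $t=1$, $r=0$) asks for the largest $k$-graph with the $(q,q)$-property, i.e.\ one in which every $q$ edges have a common vertex, and this is precisely what Frankl's lemma \cite{frankl1976sperner} computes, namely $\binom{n-1}{k-1}=\varphi_k(n,q,q)$ for $n$ large; for graphs this case was elementary because two edges meet in at most one point, but for $k\ge 3$ it genuinely needs Frankl's theorem.

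For the inductive step one checks, just as for graphs, that $\varphi_k(n,p,q)$ is strictly increasing in $p$ when $n$ is large, so a $\mathfrak{D}_k(p,q)$-free $k$-graph $G$ with $e(G)\ge\varphi_k(n,p,q)>\varphi_k(n,p-1,q)=ex_k(n,\mathfrak{D}_k(p-1,q))$ must contain a copy of some $F\in\mathfrak{D}_k(p-1,q)$, with $v(F)\le k(p-1)$ bounded. Claim~(a), that $V(G)\setminus V(F)$ spans no edge, transfers verbatim. The role of the bipartite matching produced by Lemma~\ref{konig} is now played by a fresh application of the Erd\H os matching bound: since every edge of $G$ meets $V(F)$, the number of \emph{pendant} edges (edges meeting $V(F)$ in exactly one vertex) is at least $e(G)-O(n^{k-2})$, which for $n$ large exceeds $\varphi_k(n,t,2)=ex_k(n,\mathfrak{D}_k(t,2))$ (the already-settled $q=2$ case); hence the pendant edges contain a $t$-matching $e_1,\dots,e_t$, and writing $e_i\cap V(F)=\{x_i\}$ yields the transversal candidate $X=\{x_1,\dots,x_t\}\subseteq V(F)$ together with $t$ disjoint pendants hanging off it, exactly as in the graph argument. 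One then wants the $k$-uniform analogues of Claims~(b) and~(c): that $X$ is a strongly independent set of $F$ (no edge of $F$ contains two of its vertices) with $\deg_F(x_i)=q-1$ for all $i$, that every edge of $G$ avoiding $X$ lies inside $V(F)$, and that $\langle V(F)\setminus X\rangle$ carries exactly $r$ edges of $G$; summing up, $e(G)\le\bigl(\binom nk-\binom{n-t}k\bigr)+r=\varphi_k(n,p,q)$, with the extremal $k$-graphs (as for graphs) expected to be exactly the members of $\mathfrak{F}_k(n,t,r)$.

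Each of these facts is, in principle, forced by the same kind of surgery as in Theorem~\ref{main}: given an offending edge, delete one or two $F$-edges incident to degree-$(q-1)$ vertices (chosen so that they also meet $X$) and insert the offending edge together with one or two of the pendants $e_i$, producing a $k$-graph with $p$ edges, maximum degree still $\le q-1$, and---after discarding newly isolated vertices---no isolated vertex, contradicting $\mathfrak{D}_k(p,q)$-freeness; the hypothesis $q\ge 3$ is what absorbs the fact that two inserted pendants may share a vertex outside $V(F)$. I expect the genuine obstacle to be the bookkeeping when the offending edge meets $V(F)\setminus X$ in two or more vertices: in a $k$-graph such an edge may overlap $V(F)$, the $F$-edges being deleted, and the pendants in complicated ways, and to keep all degrees at most $q-1$ while keeping the edge count exact one needs, for each degree-$(q-1)$ vertex of the offending edge, an $F$-edge through it passing through a \emph{distinct} vertex of $X$ to reroute onto a distinct pendant. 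Producing such a system is a Hall/rainbow-type requirement that need not hold for an arbitrary near-extremal $G$, so one probably has to either choose the copy $F$ with extra disjointness and saturation properties at the outset, or first establish enough degree at the transversal vertices---plausibly via a stability result for the Erd\H os matching problem---to have several disjoint pendants available at each $x_i$. Tracking the threshold ``$n$ sufficiently large'' through Frankl's bound and these estimates is routine but tedious, and it is quite possible that this last step needs a genuinely new idea rather than a careful transcription of the graph proof, which is presumably why the statement is recorded as Conjecture~\ref{conj} rather than a theorem.
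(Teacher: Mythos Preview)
The paper does not prove this statement: it is recorded as Conjecture~\ref{conj}, and the paragraph following it merely lists the special cases in which it is already known ($k=2$ with $q=2$ via Erd\H{o}s--Gallai, general $k$ with $q=2$ via Frankl's matching bound, $k=3$ with $q=3$ via F\"uredi--Jiang and F\"uredi--Jiang--Seiver, and $p=q$ via Frankl's lemma). There is therefore no ``paper's own proof'' to compare against.

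Your proposal is not a proof either, and you say so yourself. What you have written is a plausible research outline: the lower bound is Lemma~\ref{ida}; the boundary cases $q=2$ and $p=q$ are handled by the cited literature; and for the inductive step you correctly observe that Claim~(a) transfers verbatim and that a $t$-matching of pendant edges can be extracted by a second appeal to the (already settled) $q=2$ case. The genuine gap you identify is real: for $k\ge 3$ the surgery in Claims~(b) and~(c) requires, for an offending edge meeting $V(F)\setminus X$ in several degree-$(q-1)$ vertices, a system of $F$-edges routing those vertices to \emph{distinct} members of~$X$, and nothing in the setup guarantees such a system of distinct representatives. Your suggestion that one might need a carefully chosen copy of $F$ or a stability input for the matching problem is reasonable, but until that step is actually carried out this remains an open conjecture, exactly as the paper presents it.
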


If $k=2$, $p\geq q=2$, the conjecture is true due to a theorem of Erd\H{o}s and Gallai in \cite{erdHos1959maximal}; if $k\geq 2$ the theorem of Frankl in \cite{frankl2013improved} also confirms the conjecture; if $k=3$ the theorems in \cite{furedi2013hypergraph} and \cite{furedi2011exact} show the $(p,3)$-extremal problem; and finally if $p=q$ and $k\geq 2$, the lemma of Frankl in \cite{frankl1976sperner} proves the $(q,q)$-problem.

In order to analyze the $(p,3)$-property for graphs and $n=p$ we need the following result.
\begin{lemma}\label{p3}
If $F$ is a graph such that $v(F)=e(F)\geq 3$, then $F$ has a vertex of degree greater than 3 or all the vertices have degree 2.
\end{lemma}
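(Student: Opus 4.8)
The statement to prove is a small combinatorial lemma: if $F$ is a graph with $v(F)=e(F)\ge 3$, then either $\Delta(F)\ge 4$ or $F$ is $2$-regular. The natural tool is the handshaking lemma, $\sum_{v\in V(F)}\deg_F(v)=2e(F)=2v(F)$, so the average degree is exactly $2$. I would first dispose of isolated vertices: if some vertex has degree $0$, then some other vertex has degree $\ge 3$ (otherwise the degree sum is strictly less than $2v(F)$), and if that degree is $\ge 4$ we are done, so the real work is the case of minimum degree $\ge 1$. Actually it is cleaner not to assume anything about isolated vertices and instead argue directly by contradiction: suppose $\Delta(F)\le 3$ and $F$ is not $2$-regular, and derive a contradiction with the degree sum being exactly $2v(F)$.

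The key step is the following counting observation. Assume $\Delta(F)\le 3$. Let $n_i$ be the number of vertices of degree $i$ for $i=0,1,2,3$, so $n_0+n_1+n_2+n_3=v(F)$ and $n_1+2n_2+3n_3=2v(F)=2(n_0+n_1+n_2+n_3)$. Rearranging gives $n_3=2n_0+n_1$. If $F$ is not $2$-regular then $(n_0,n_1,n_3)\ne(0,0,0)$, and I want to use this to locate a vertex of degree $\ge 4$, contradicting $\Delta(F)\le 3$ — but wait, that is exactly what we assumed away, so instead the contradiction must come from elsewhere. The resolution: the equation $n_3=2n_0+n_1$ is perfectly consistent (e.g. $n_0=0$, $n_1=2$, $n_3=2$, rest degree $2$), so a pure degree-sequence count is \emph{not} enough, and one genuinely needs a structural argument. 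So I would instead argue: among all counterexamples take one; repeatedly delete a vertex of degree $\le 1$ (each such deletion removes one vertex and $\le 1$ edge, and if it removes $0$ edges we've produced a smaller graph still satisfying $v=e$, while if it removes $1$ edge the quantity $e-v$ is preserved) to reduce to the case $\delta(F)\ge 2$; then the degree sum forces $F$ to be exactly $2$-regular. This last implication is the crux: with $\delta\ge 2$, $\Delta\le 3$, and $\sum\deg=2v$, the only solution to $n_2+n_3=v$, $2n_2+3n_3=2v$ is $n_3=0$, i.e. $2$-regular, contradicting "not $2$-regular."

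The subtlety — and what I expect to be the main obstacle in writing this carefully — is the deletion step: when we delete a degree-$1$ vertex, its neighbour's degree drops by $1$, which could create a new degree-$1$ or degree-$0$ vertex, and we must check the invariant $v(F)=e(F)$ (equivalently $e-v$) is maintained and that we can still reach $v\ge 3$ or handle the base cases ($v\in\{1,2\}$ with $v=e$: $v=1$ forces a loop, $v=2$ forces a double edge or a loop-plus-something, so if $F$ is a simple graph these base cases are vacuous, and the statement's hypothesis $v=e\ge3$ lets us avoid them anyway). Alternatively, and probably more cleanly, I would present it as: delete \emph{all} isolated vertices first (this can only help, as it decreases $v$ while keeping $e$ fixed, so now $e\ge v$), then if $\delta(F)\ge 2$ the degree-sum argument above finishes, and if some vertex has degree $1$, deleting it keeps $e-v$ unchanged and iterates; this process terminates, and since at termination either $\delta\ge 2$ or the graph is empty, and the empty graph has $e=v=0<3-$worth of edges started with, we get the needed vertex of large degree must have appeared — here I'd track the quantity $e-v\ge 1$ carefully to pin down where a degree-$\ge4$ vertex is forced. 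I would keep the exposition short, since this is a routine lemma feeding into the $(p,3)$-analysis that follows.
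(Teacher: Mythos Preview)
Your proposal founders on a misreading of the statement --- though the fault lies partly with the paper's phrasing. You take ``degree greater than $3$'' to mean $\Delta(F)\ge 4$, but the paper means $\Delta(F)\ge 3$: its own proof declares ``there exists a vertex $y$ such that $\deg(y)\ge 3$, then the statement is true,'' and the theorem that follows invokes the lemma only to extract ``a vertex of degree $3$.'' Under your literal $\ge 4$ reading the lemma is simply false --- a triangle with a pendant edge has $v=e=4$, maximum degree $3$, and is not $2$-regular --- and the degree sequence $n_1=2$, $n_3=2$ that you yourself write down is another witness (realisable on five vertices). This is exactly why your deletion argument keeps refusing to close: no amount of tracking $e-v$ will manufacture a degree-$4$ vertex that need not exist.

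With the intended reading, your very first instinct already finishes the proof in one line: the handshaking identity gives average degree exactly $2$, so if $\Delta(F)\le 2$ then every vertex has degree $2$. The paper instead argues by induction on $v(F)$: in the inductive step it locates a vertex $x$ of degree $1$, applies the hypothesis to $F-x$ (which again satisfies $v=e$), and notes that whether $F-x$ is $2$-regular or already contains a vertex of degree $\ge 3$, in either case $F$ has such a vertex (the neighbour of $x$, respectively that same vertex). Your direct handshaking argument is shorter; the paper's induction is essentially your deletion idea carried out one step at a time.
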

\begin{proof}
By induction on the number of vertices. For $n=3$ the result is clear.
Suppose that for $n\geq 3$ we have that the statement is true. Let $F$ be a graph with $n+1=v(F)=e(F)$. If $\deg(x)=2$ for all $x\in V(F)$ or there exists a vertex $y$ such that $\deg(y)\geq 3$, then the statement is true, so suppose that we have a vertex $x$ such that $\deg(x)=1$. Then the graph $F-x$ satisfies the induction hypothesis, and then all the vertices of $F-x$ have degree 2 or there exists a vertex of degree greater than 3; in either case we conclude that $F$ has a vertex of degree greater than 3.
\end{proof}
\begin{theorem}For $p\geq 3$,
\[ex(p,\mathfrak{D}_2(p,3))=\binom{p-1}{2}+1.\]
\end{theorem}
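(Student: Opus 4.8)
The heart of the argument is a reduction of the $(p,3)$-property on $p$ vertices to the absence of a $2$-factor (a spanning disjoint union of cycles). First I would show that if $\mathcal{H}\subseteq G$ and $\mathcal{H}\in\mathfrak{D}_2(p,3)$, then $\mathcal{H}$ is a $2$-factor of $G$. Indeed $e(\mathcal{H})=p$, so the handshaking inequality recorded in Section~2 gives $v(\mathcal{H})\ge \frac{2p}{q-1}=p$ with $q=3$; since $\mathcal{H}$ is a subhypergraph of $G$ we also have $v(\mathcal{H})\le p$, hence $v(\mathcal{H})=e(\mathcal{H})=p\ge 3$. Now Lemma~\ref{p3} applies: because $\Delta(\mathcal{H})\le q-1=2$, the graph $\mathcal{H}$ has no vertex of degree at least $3$, so every vertex of $\mathcal{H}$ has degree exactly $2$; thus $\mathcal{H}$ is a $2$-regular spanning subgraph of $G$, i.e.\ a $2$-factor. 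Conversely, every $2$-factor of $G$ has $p$ edges, minimum degree $2\ge 1$ and maximum degree $2=q-1$, hence lies in $\mathfrak{D}_2(p,3)$. Therefore $G$ satisfies the $(p,3)$-property if and only if $G$ has no $2$-factor, and $ex(p,\mathfrak{D}_2(p,3))$ equals the maximum number of edges of a $p$-vertex graph with no $2$-factor.

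For the lower bound, take the graph consisting of a clique on $p-1$ vertices together with one additional vertex joined to the clique by a single edge. It has $\binom{p-1}{2}+1$ edges, and it has no $2$-factor because the extra vertex has degree $1$; hence $ex(p,\mathfrak{D}_2(p,3))\ge \binom{p-1}{2}+1$.

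For the upper bound, let $G$ be a $p$-vertex graph with $e(G)\ge\binom{p-1}{2}+2$; I must produce a $2$-factor. If $\delta(G)\le 1$, pick a vertex $v$ with $\deg(v)\le 1$; then $G-v$ has $p-1$ vertices and at least $\binom{p-1}{2}+1$ edges, which is impossible, so $\delta(G)\ge 2$. Passing to the complement $\overline{G}$ we get $e(\overline{G})=\binom{p}{2}-e(G)\le p-3$ and $\Delta(\overline{G})=(p-1)-\delta(G)\le p-3$. For any two vertices $u,w$ non-adjacent in $G$ (so $uw\in E(\overline{G})$), the sum $\deg_{\overline{G}}(u)+\deg_{\overline{G}}(w)$ counts $uw$ twice and every other edge of $\overline{G}$ at most once, so it is at most $e(\overline{G})+1\le p-2$; therefore $\deg_G(u)+\deg_G(w)=2(p-1)-\bigl(\deg_{\overline{G}}(u)+\deg_{\overline{G}}(w)\bigr)\ge p$. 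By Ore's theorem, $G$ is Hamiltonian, and a Hamiltonian cycle is a $2$-factor. Combining the two bounds yields $ex(p,\mathfrak{D}_2(p,3))=\binom{p-1}{2}+1$.

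The main obstacle is the upper bound once $\delta(G)\ge 2$ has been secured: one genuinely needs a Hamiltonicity (or $2$-factor) criterion for nearly complete graphs, which I route through the Ore degree condition above. If one prefers to avoid invoking Ore's theorem, the same conclusion can be obtained by a direct extension argument — take a family of vertex-disjoint cycles in $G$ covering a maximum number of vertices, observe that the uncovered vertices span a forest and that $\delta(G)\ge 2$ forces an uncovered vertex to have two neighbours on the covered part, and then enlarge or merge cycles to absorb it; the case analysis in that argument is the delicate point, so the Ore route is the cleanest.
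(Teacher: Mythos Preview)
Your argument is correct. Both your proof and the paper's use the same extremal example $K_{p-1}$ plus a pendant edge, and both finish the upper bound through Ore's theorem; the packaging differs slightly. You first establish the exact equivalence that, on $p$ vertices, being $\mathfrak{D}_2(p,3)$-free is the same as having no $2$-factor (via the handshaking bound and Lemma~\ref{p3}), and then verify Ore's degree condition directly in the complement to force a Hamiltonian cycle. The paper instead takes the one-line shortcut $C_p\in\mathfrak{D}_2(p,3)$, invokes monotonicity of the extremal function to get $ex(p,\mathfrak{D}_2(p,3))\le ex(p,C_p)$, and then quotes Ore's result $ex(p,C_p)=\binom{p-1}{2}+1$. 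Your route yields a bit more structural information (the full $2$-factor characterization, which also makes the lower bound immediate) and is self-contained; the paper's route is shorter since it only needs the single containment $C_p\in\mathfrak{D}_2(p,3)$ rather than the full equivalence.
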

\begin{proof}
We prove the first inequality $ex(p,\mathfrak{D}_2(p,3))\geq\binom{p-1}{2}+1$ by showing that the graph $G= K_{p-1}+e$ (the complete graph $K_{p-1}$ with an extra edge) satisfies the $(p,3)$-property. Any set of $p$ edges in $G$ generates a subgraph $F$ with $p$ vertices and $p$ edges which either has a vertex of degree greater than 3 or all the vertices have degree 2 by Lemma \ref{p3}, but the latter is not possible because we have a vertex of degree 1 in $G$, so $F$ has a vertex of degree 3.
Then $G$ satisfies the $(p,q)$-property and $ex(p,\mathfrak{D}_2(p,3))\geq \binom{p-1}{2}+1$.

For the other inequality we observe that the forbidden graph family for the $(p,3)$-extremal problem is $\displaystyle \mathfrak{D}_2(p,3)$, so the elements of this family are cycles, paths or combinations of both, always with exactly $p$ edges. Then it is clear that the cycle $C_p$ with $p$ edges is a element of $\mathfrak{D}_2(p,3)$ and by the monotone property of $ex(n,\mathfrak{F})$ and Ore's theorem (see \cite{ore1961arc}) we prove that $ex(p,\mathfrak{D}_2(p,3))\leq ex(p,C_p)=\binom{p-1}{2}+1$.
\end{proof}

\section{An application of the (p,q)-extremal problem on fractional coloring of hypergraphs}

In this section we will apply the $(p,q)$-extremal problem to find the fractional chromatic number of Kneser hypergraphs. A set system is \emph{$q$-wise disjoint} if any choice of $q$ sets has an empty intersection. If $q=2$ then we are saying that the system is pairwise disjoint. A \emph{k-coloring} of the vertex set of a hypergraph is a partition of the vertices into $k$ classes such that each class has no edges.

\begin{definition}The \emph{q-wise Kneser $p$-uniform hypergraph}, denoted by $K^p_q\binom{[n]}{k}$, is the $p$-uniform hypergraph with the vertex set $V(K_q^p\binom{[n]}{k})=\binom{[n]}{k}$, and $p$ sets form an edge if they are a $q$-wise disjoint family.
\end{definition}

In 1990 K.\ S.\ Sarkaria proved that the chromatic number of the Kneser $q$-wise $p$-hypergraph is given by the following formula (see \cite{sarkaria1990generalized}):
\[\displaystyle \chi \left(K_q^p \binom{[n]}{k}\right) =\left \lceil \frac{n(q-1)-p(k-1)}{p-1}\right\rceil 
\]for $p,q,n,k\in \mathbb{N}$, $p\geq q\geq 2$ and $n\geq k$.

\subsection{Fractional chromatic number for hypergraphs and its equivalence with extremal problems}

The fractional chromatic number is a generalization of the classical chromatic number where each vertex is colored with a set of colors instead of a single color. There are several equivalent definitions of fractional coloring. In this paper we use the one that defines the fractional chromatic number in terms of independent sets. An \emph{independent set} of $V(\mathcal{H})$ is a set with no edges. The \emph{independence number} of an hypergraph $\mathcal{H}$, denoted by $\alpha(\mathcal{H})$, is defined as the maximum cardinality of a independent set; the family of all independent sets is denoted by $I(\mathcal{H})$, and $I(\mathcal{H},v)$ is the family of all independent sets containing the vertex $v$.

Given a hypergraph $\mathcal{H}=(V,E)$, a \emph{fractional coloring} of a hypergraph is a function $f\colon I(\mathcal{H})\rightarrow \mathbb{R^+}$ such that for all $v \in V(\mathcal{H})$,
\[\sum_{S\in I(\mathcal{H},v)} f(S)\geq 1.
\]The \emph{weight} of a fractional coloring is the sum of all its values, and is defined by the formula $p(f)=\sum_{S\in I(\mathcal{H})} f(S)$. The \emph{fractional chromatic number} of a hypergraph is the minimum possible weight for a fractional coloring and is denoted by $\chi_f(\mathcal{H})$. One important observation is that $\chi_f(\mathcal{H})\leq \chi(\mathcal{H})$ for every hypergraph $\mathcal{H}$. For further details on the definition and properties of the fractional coloring of hypergraphs see \cite{godsil2001algebraic} and \cite{scheinerman2011fractional}. One of the classical results of fractional coloring is that if $\mathcal{H}$ is vertex-transitive, then 
	\[	\chi_f(\mathcal{H})= \frac{v(\mathcal{H})}{\alpha(\mathcal{H})}.\]
	It is easy to check that the Kneser hypergraphs $K^p_q\binom{[n]}{k}$ are vertex-transitive.

Thus, using all our previous results we know that
\[\displaystyle \chi_f\left(K^p_q\binom{[n]}{k}\right)=\frac{\binom{n}{k}}{\alpha(K^p_q\binom{[n]}{k})}.
\]
In order to calculate the fractional chromatic number of a Kneser hypergraph, it is thus sufficient to find its independence number. Note that all independent sets $\mathcal{F}$ of $K_q^p\binom{[n]}{k}$ satisfy that for every $p$ elements of $\mathcal{F}$, $q$ of them intersect: this is precisely the $(p,q)$-property for $k$-uniform hypergraphs. Thus determining the independence number of $K^p_q\binom{[n]}{k}$ is equivalent to finding the $(p,q)$-extremal problem. The following is a corollary of Theorem \ref{main}.

\begin{corollary}
Let $n,p,q,t$ be positive integers, where $p-1=(q-1)t+r$, $p\geq q\geq 3$, $0\leq r < q-1$, $n\geq 2p^2$. Then
\[\displaystyle \chi_f\left(K^p_q\binom{[n]}{2}\right)=\frac{\binom{n}{2}}{\binom{n}{2}-\binom{n-t}{2}+r}.
\]\end{corollary}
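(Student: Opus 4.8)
The plan is to chain together three facts that are already in place in the paper, specializing everything to $k=2$. First, since the Kneser hypergraph $K^p_q\binom{[n]}{2}$ is vertex-transitive (every permutation of $[n]$ induces an automorphism, and $S_n$ acts transitively on $\binom{[n]}{2}$), the classical identity for the fractional chromatic number of a vertex-transitive hypergraph quoted in Section 4 gives
\[
\chi_f\!\left(K^p_q\binom{[n]}{2}\right)=\frac{v\!\left(K^p_q\binom{[n]}{2}\right)}{\alpha\!\left(K^p_q\binom{[n]}{2}\right)}=\frac{\binom{n}{2}}{\alpha\!\left(K^p_q\binom{[n]}{2}\right)},
\]
so the entire task reduces to determining the independence number of $K^p_q\binom{[n]}{2}$.

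Second, I would make explicit the dictionary between the two sides. By definition, $p$ vertices of $K^p_q\binom{[n]}{2}$ form an edge exactly when the corresponding $2$-sets are a $q$-wise disjoint family; hence a family $\mathcal{F}\subseteq\binom{[n]}{2}$ is an independent set of $K^p_q\binom{[n]}{2}$ if and only if it contains no such configuration, i.e.\ if and only if $\mathcal{F}$, viewed as a graph on $[n]$, has the $(p,q)$-property, equivalently is $\mathfrak{D}_2(p,q)$-free (this last equivalence is recorded just before Lemma \ref{ida}). Therefore $\alpha\!\left(K^p_q\binom{[n]}{2}\right)=ex(n,\mathfrak{D}_2(p,q))$.

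Third, I would invoke the restated form of Theorem \ref{main}: under the hypotheses $n\geq 2p^2$ and $p\geq q\geq 3$, with $t=\lfloor (p-1)/(q-1)\rfloor$ and $r$ the residue of $(p-1)/(q-1)$, one has $ex(n,\mathfrak{D}_2(p,q))=\varphi_2(n,p,q)=\binom{n}{2}-\binom{n-t}{2}+r$. Substituting this value of $\alpha$ into the vertex-transitive identity above yields the claimed formula, finishing the proof.

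As for the main obstacle: there really is none of substance here, which is the point of stating it as a corollary. All the genuine work — the upper bound on $ex(n,\mathfrak{D}_2(p,q))$ and the identification of the extremal graphs — is carried by Theorem \ref{main}; the only things needing a line of justification are the vertex-transitivity of $K^p_q\binom{[n]}{2}$ and the (immediate) translation between "independent set of the Kneser hypergraph" and "graph with the $(p,q)$-property". If one wished, one could even replace the constant $2p^2$ by the sharper bound $\binom{2(p-1)}{2}+\binom{t+1}{2}-\frac{2(p-1)(t-1)}{q-1}-r$ indicated in the remark after Theorem \ref{main}, but this does not change the structure of the argument.
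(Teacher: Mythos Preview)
Your proposal is correct and follows exactly the route the paper itself takes: the text immediately preceding the corollary already records the vertex-transitivity identity, the identification of independent sets of $K^p_q\binom{[n]}{k}$ with $k$-uniform hypergraphs having the $(p,q)$-property, and then simply invokes Theorem~\ref{main} for $k=2$. There is no separate formal proof in the paper beyond that discussion, so your write-up is essentially a clean unpacking of what the paper leaves implicit.
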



\subsection*{Acknowledgements}
The authors wish to acknowledge support from CONACyT under projects 166306, 178395 and support from PAPIIT-UNAM 
under projects IN101912, IN112614.  We would like to thank CINNMA for the support given throughout the years.

\end{document}